\documentclass[12pt]{amsart}
\usepackage{amsmath,amsthm,amssymb,bbm,dsfont}
\usepackage{graphicx}
\usepackage{subfig}
\usepackage{cite,float}

\textwidth=424pt \evensidemargin=9pt \oddsidemargin=9pt
\marginparsep=8pt \marginparpush=8.1pt \textheight=637pt
\topmargin=-20pt

\numberwithin{equation}{section}
\newtheorem{theorem}{Theorem}[section]
\newtheorem{lemma}[theorem]{Lemma}
\newtheorem{result}[theorem]{Result}
\newtheorem{corollary}[theorem]{Corollary}
\newtheorem{proposition}[theorem]{Proposition}
\theoremstyle{remark}
\newtheorem{remark}[theorem]{Remark}
\newtheorem{example}[theorem]{Example}
\newtheorem{definition}[theorem]{Definition}

\makeatletter
\@namedef{subjclassname@2010}{%
  \textup{2010} Mathematics Subject Classification}
\makeatother
\begin{document}

\title{a note on squeezing function and its generalizations }

\author[N. Gupta]{Naveen Gupta}
\address{Department of Mathematics, University of Delhi,
Delhi--110 007, India}
\email{ssguptanaveen@gmail.com}

\author[S. Kumar]{Sanjay Kumar Pant}

\address{Department of Mathematics, Deen Dayal Upadhyaya college, University of Delhi,
Delhi--110 078, India}
\email{skpant@ddu.du.ac.in}

\begin{abstract}
This note investigates the relation between  squeezing function and its generalizations. Using 
the relation obtained, we present an alternate method to find expression of generalized squeezing function 
of unit ball corresponding to the generalized complex ellipsoids.
 \end{abstract}

\keywords{squeezing function; holomorphic homogeneous regular domains; 
	quasi balanced domains.}
\subjclass[2010]{32F45, 32H02}
\maketitle

\section{introduction}
This note clarifies the relation between squeezing function and its generalizations, namely, generalized squeezing function and $d$-balanced squeezing function.
 The notion of squeezing function was first introduced by Deng, Guan and Zhang \cite{2012}. Although 
 its genesis can be traced back to the work of Liu et al \cite{Yau2004}, \cite{Yau2005} and of Yeung \cite{yeung}. 
Rong and Yang\cite{gen} introduced the notion of generalized squeezing function. The authors in 
\cite{d-balanced}  extended it to
$d$-balanced squeezing function. Let us see the definitions of all these terms first. 

$\mathbb{B}^n$ denotes unit ball in $\mathbb{C}^n$ and $D\subseteq \mathbb{C}^n$  is used for
bounded domain. The set  of all injective holomorphic 
maps from $D$ to  a domain $\Omega\subseteq \mathbb{C}^n$ is denoted by 
$\mathcal{O}_u(D,\Omega)$.

For $z\in{D}$ 
the squeezing function $S_{D}$ on $D$ is defined as
$$S_{D}(z):=\sup_f\{r:\mathbb{B}^n(0,r)\subseteq f(D), f\in{\mathcal{O}_u(D,\mathbb{B}^n)}\},$$
where $\mathbb{B}^n(0,r)$ denotes ball of radius $r$ centered at the origin.
\smallskip

A domain is said to be holomorphic homogeneous regular if its squeezing function has a positive lower bound. Notion of
holomorphic homogeneous regular (HHR) manifolds was studied by Liu et al \cite{Yau2004}, \cite{Yau2005}. In 2009 \cite{yeung},
Yeung renamed HHR as uniform squeezing property. It was then formally defined in terms of squeezing function by Deng et al in 2012\cite{2012}.
Holomorphic homogeneous regular domains have important significance due to several anlaytic and geometric properties they ensure. 

 In \cite{gen}, Rong and Yang introduced the concept of generalized squeezing function
$S^{\Omega}_D$ for bounded domains $D,\Omega\subseteq \mathbb{C}^n$, where $\Omega$ is a
balanced domain.

Let us quickly see the definitions of a balanced domain and Minkowski function. A 
domain $\Omega\subseteq \mathbb{C}^n $ is said to be balanced if  $\lambda z\in{\Omega}, $
for each $z\in{\Omega}$ and $|\lambda|\leq 1$. 
The Minkowski function denoted by $h_{\Omega}$  on 
$\mathbb{C}^n$ is defined as 
$$h_{\Omega}(z):=\inf \{t>0:\frac{z}{t}\in{\Omega}\}.$$
For $0<r\leq 1$, let $\Omega(r):=\{z\in{\mathbb{C}^n:h_{\Omega}(z)<r\}}$. For 
a bounded domain $D\subseteq \mathbb{C}^n$ and a bounded, balanced, convex domain 
$\Omega\subseteq \mathbb{C}^n$, Rong and Yang introduced the notion of generalized squeezing
function $S_D^{\Omega}$ on $D$ as
\begin{equation*}\label{eqn:gensq}
S^{\Omega}_D(z):=\sup \{r:\Omega(r)\subseteq f(D), f\in{\mathcal{O}_u(D,\Omega)}, f(z)=0\}.
\end{equation*} 

Notion of balanced domains was introduced by Nikolov in his work \cite{nikolov-d}. Let 
$d=(d_1,d_2,\ldots,d_n)\in{\mathbb{Z}_n^{+}},n\geq 2$, we say that a domain $\Omega\subseteq \mathbb{C}^n$
is $d$-balanced if for each $z=(z_1,z_2,\ldots, z_n)\in \Omega$
and $\lambda \in{\overline{\mathbb{D}}}$, $\left(\lambda^{d_1}z_1,\lambda^{d_2}z_2,\ldots, 
\lambda^{d_n}z_n\right) \in \Omega,$ where $\mathbb{D}$ denotes unit disk in $\mathbb{C}$.
It is easy to see that balanced domains are $(1,1,\ldots, 1)$-balanced.

For a $d$-balanced domain $\Omega$, an analogue of Minkowski function,
called $d$-Minkowski function on $\mathbb{C}^n$,
denoted by $h_{d,\Omega}$, is defined as 
$$h_{d,\Omega}(z):=\inf \{t>0:\left(\frac{z_1}{t^{d_1}},\frac{z_2}{t^{d_2}},\ldots,
\frac{z_n}{t^{d_n}}\right)\in{\Omega}\}.$$
For each $0<r\leq 1$, we fix 
$\Omega^d(r):=\{z\in{\mathbb{C}^n:h_{d,\Omega}(z)<r\}}$.

\begin{definition} 
	For a bounded domain $D\subseteq \mathbb{C}^n$, and a bounded, convex, 
	$d=(d_1,d_2,\ldots, d_n)$-balanced domain $\Omega$, 
	\emph{ $d$-balanced squeezing function corresponding to $\Omega$} of the domain $D$,  
	denoted by $S_{d,D}^\Omega$(also called the\emph{ $d$-balanced squeezing function} for brevity)
	is given by:
	\begin{equation*}\label{eqn:gensq}
	S_{d,D}^\Omega(z):=\sup \{r:\Omega^d(r)\subseteq f(D), f\in{\mathcal{O}_u(D,\Omega)}, f(z)=0\}.
	\end{equation*}
\end{definition}

Similar to the case of squeezing function, holomorphic homogeneous regular domains are the ones whose generalized squeezing function
(or $d$-balanced squeezing function) has a positive lower bound.

As we know that squeezing function was introduced in the context of uniform squeezing property (HHR), so 
it is natural to expect that regularity remains unaltered when one shifts from 
squeezing function to its generaliations.
More precisely, it is natural to expect that if a domain $D$ is regular in 
$S_D$, then it should be regular in $S_D^{\Omega}$ or $S_{d,D}^{\Omega}$. To address this we analyze relations between these 
objects in Theorems \ref{prop:rel_sq_gensq} and \ref{prop:sq_dbal}.
 We would like to 
point out that apart from addressing the question about regularity of a domain, these results will also serve its purpose in estimating squeezing function
for different domains. Another similar question related to regularity is behaviour of these generalizations corresponding to different model domains.
What we mean by this is: For a given domain $D$ and two bounded, balanced, convex domains $\Omega_1, \Omega_2$, if $D$ is regular as per the setup of 
$S_D^{\Omega_1}$, will it be regular as per $S_D^{\Omega_2}$? A similar question can be formed for $S_{d,D}^{\Omega}.$ 
One can not be unjust for expecting this and this is the content of our Corollary \ref{thm:rel_gensq} 
and Corollary \ref{thm:dbal}.

\section{$S_D$ and $S_D^{\Omega}$}

In \cite[~Lemma 1.2]{npoly} a relation between the squeezing function and the squeezing function corresponding 
to polydisk is presented. Such a relation immediately deduces the equivalence between holomorphic homogeneous 
regular domains in these two setups and is helpful in obtaining estimate for squeezing function [See examples in 
\cite{npoly}]. We present a similar relation between squeezing function and generalized squeezing function, which 
in particular will imply that the holomorphic homogeneous regular domains are the same for $S_D$ and $S_D^{\Omega}$. 
We point out here that equivalence of holomorprhic homogeneous regular domains is established in 
\cite[~Theorem 3.9]{gen}, but the inequality 
established here is more useful in estimating generalized squeezing function.  
For a domain  $D\subseteq \mathbb{C}^n$ and $z_1,z_2\in D$, the Carathéodory pseudodistance  $c_{D}$ on $D$ is defined as 
$$c_D(z_1,z_2)=\sup_f \{p(0,\mu):f\in{\mathcal{O}(\mathbb{D}, D), f(z_1)=0, f(z_2)=\mu}\},$$ where $p$ denotes the Poincar\'e metric on unit disc $\mathbb{D}$. We take $c_D^*=\tanh c_D$. Using \cite[~Lemma 3.2]{rong}, for a bounded, balanced, convex domain $D$, we have $c_D^*(0,z)=h_D(z)$ for $z\in D$.

\begin{proposition}{\label{prop:rel_sq_gensq}}
	Let $D\subseteq \mathbb{C}^n$ be bounded and $\Omega\subseteq \mathbb{C}^n$ be bounded, balanced, convex domain.
	Then for $a\in D$ the following holds:
	\begin{enumerate}
		\item $S_D(a)\geq \dfrac{\alpha}{R}S_D^{\Omega}(a),$
		\item $S_D^{\Omega}(a)\geq \dfrac{\alpha}{R}S_D(a),$	
	\end{enumerate}
where $\alpha= dist(0,\partial \Omega)$ and $R=\dfrac 1 2 diam(\Omega)$.
\end{proposition}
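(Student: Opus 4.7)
The plan is to exploit two elementary inclusions that the constants $\alpha$ and $R$ encode. Since $\alpha = \mathrm{dist}(0, \partial\Omega)$, we have $\mathbb{B}^n(0,\alpha) \subseteq \Omega$. For the other direction, because $\Omega$ is balanced, $z \in \Omega$ implies $-z \in \Omega$, so $2|z| = |z - (-z)| \leq \mathrm{diam}(\Omega) = 2R$, which yields $\Omega \subseteq \mathbb{B}^n(0,R)$. Moreover, since $\Omega$ is balanced and convex, $h_\Omega$ is a Minkowski seminorm and $\Omega(r) = r\Omega$ for $r \in (0,1]$. Rescaling gives the two key containments
\[
\mathbb{B}^n(0,\alpha r) \;\subseteq\; \Omega(r) \;\subseteq\; \mathbb{B}^n(0, R r).
\]

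For part (1), I would take any $r < S_D^{\Omega}(a)$ and pick $f \in \mathcal{O}_u(D,\Omega)$ with $f(a)=0$ and $\Omega(r) \subseteq f(D)$. Dividing by $R$, the map $g := f/R$ is an injective holomorphic map from $D$ into $\Omega/R \subseteq \mathbb{B}^n$ with $g(a)=0$. Its image satisfies $g(D) = f(D)/R \supseteq \Omega(r)/R \supseteq \mathbb{B}^n(0, \alpha r/R)$ by the left inclusion above. Hence $\alpha r / R$ is admissible in the definition of $S_D(a)$, and taking the supremum over $r < S_D^\Omega(a)$ yields $S_D(a) \geq (\alpha/R)\, S_D^\Omega(a)$.

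For part (2), I would run the same argument in reverse. Given $r < S_D(a)$ and $f \in \mathcal{O}_u(D, \mathbb{B}^n)$ with $f(a)=0$ and $\mathbb{B}^n(0,r) \subseteq f(D)$, the map $g := \alpha f$ lands in $\alpha \mathbb{B}^n \subseteq \Omega$, is injective holomorphic, and vanishes at $a$. Its image contains $\alpha \mathbb{B}^n(0, r) = \mathbb{B}^n(0, \alpha r)$, which by the right inclusion above contains $\Omega(\alpha r / R)$. So $\alpha r/R$ is admissible in the definition of $S_D^\Omega(a)$, giving $S_D^\Omega(a) \geq (\alpha/R) S_D(a)$.

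There is no real obstacle here: both directions are linear rescalings, and the condition $f(a)=0$ transfers trivially because we multiply only by positive scalars. The only point worth flagging is that the identity $\Omega(r) = r\Omega$ depends on $\Omega$ being balanced and convex, which is part of the hypothesis; without convexity $h_\Omega$ would not be positively homogeneous in a convenient way. I would likely state this rescaling identity as a brief remark before the two estimates, so the proof reduces to writing down $g = f/R$ and $g = \alpha f$, respectively, and tracking the inclusions.
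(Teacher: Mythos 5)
Your argument is correct, and the skeleton is the same as the paper's: you use exactly the two rescaled maps that appear there ($g=\alpha f$ to turn a competitor for $S_D$ into one for $S_D^{\Omega}$, and $g=f/R$ in the other direction). The genuine difference is how the comparison between Euclidean balls and the sublevel sets $\Omega(s)$ is obtained. The paper proves $\mathbb{B}^n(0,\alpha s)\subseteq\Omega(s)$ and $\Omega(s)\subseteq\mathbb{B}^n(0,Rs)$ indirectly, via the identity $c^*_{\Omega}(0,\cdot)=h_{\Omega}$ for bounded balanced convex domains (quoted from the literature, a Lempert-type fact) together with the monotonicity of the Carath\'eodory pseudodistance under inclusions; you obtain the same inclusions from the elementary identities $\Omega(s)=s\Omega$ and $\mathbb{B}^n(0,\alpha)\subseteq\Omega\subseteq\mathbb{B}^n(0,R)$. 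This buys you a proof free of invariant metrics, and---contrary to your closing remark---it does not actually use convexity: absolute homogeneity of $h_{\Omega}$ and $\Omega=\{h_{\Omega}<1\}$, hence $\Omega(s)=s\Omega$, hold for every balanced domain, whereas convexity is genuinely needed for the paper's identity $c^*_{\Omega}(0,\cdot)=h_{\Omega}$. Two small points to tidy up: the estimate $2|z|\le \operatorname{diam}(\Omega)$ only gives $\Omega\subseteq\overline{\mathbb{B}^n(0,R)}$, so to conclude $g=f/R$ maps into the open ball invoke openness of $\Omega$ (or of $g(D)$); and in part (2) note $\alpha\le R$, so $\alpha r/R\le r\le 1$ and $\Omega(\alpha r/R)$ is indeed defined. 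Both are immediate.
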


\begin{proof} $\mbox{ }$
\begin{enumerate} 
\item 
	For $a\in D$,  let $f:D\to \mathbb{B}^n$ be an injective holomorphic map with $f(a)=0$. Let $r>0$ 
	be such that $B^n(0,r)\subseteq f(D)$. Consider $g:D\to \Omega$ defined as $$g(z):=\alpha f(z).$$
	 Note that $g$ 
	is injective holomorphic with $g(a)=0$ and $g(z)\in{\Omega}$ for all $z\in D$. Also note that 
	$B^n(0,\alpha r)
	\subseteq g(D)$. We claim that $\Omega\left(\dfrac{\alpha r}{R}\right)\subseteq g(D).$ 
	Before proving our claim, we make an
	 observation that $\Omega\subseteq B^n(0,R)$ and therefore 
	\begin{equation}\label{eq:sub}
	 c^*_{B^n(0,R)}(0,z)\leq c^*_{\Omega}(0,z) =h_{\Omega}(z)\ \mbox{for all}\ z\in \Omega.	 
\end{equation}
In order to prove our claim, see that for $z\in{\Omega\left(\dfrac{\alpha r}{R}\right)},$
using equation \eqref{eq:sub}, we have $$c^*_{B^n(0,R)}(0,z)\leq c^*_{\Omega}(0,z) =h_{\Omega}(z)<\dfrac{\alpha r}{R}.$$ 
This gives us $z\in{\mathbb{B}^n(0,\alpha r)} $ and thus $z\in g(D).$ Therefore, we get our 
claim and hence 
	$S_D^{\Omega}(a)\geq \dfrac{\alpha r}{R}$, which implies $r\leq \dfrac{R}{\alpha} S_D^{\Omega}(a)$. 
	This establishes (1).

	\item Let $f:D\to \Omega$ be an injective holomorphic map with $f(a)=0$. Let $r>0$ be such that $\Omega(r) 
	\subseteq f(D)$. Consider $g:D\to \mathbb{B}^n$ defined as $$g(z):=\dfrac{f(z)}{R}.$$ Observe that $g$ is injective 
	holomorphic with $g(a)=0$. Also, $\Omega\left(\dfrac {r}{R}\right)=\dfrac 1 R \Omega(r)\subseteq g(D)\subseteq \mathbb{B}^n.$ Since 
	$\mathbb{B}^n(0,\alpha)\subseteq \Omega$, therefore 
	\begin{equation}\label{eq:sub1}
	h_{\Omega}(z)=c^*_{\Omega}(0,z) \leq c^*_{B^n(0,\alpha)}(0,z) \ \mbox{for all} \ z\in \mathbb{B}^n(0,\alpha).
	\end{equation}
	We claim that $\mathbb{B}^n\left(0,\dfrac {\alpha r}{R}\right)\subseteq g(D).$ 
	For $z\in \mathbb{B}^n\left(0,\dfrac {\alpha r}{R}\right),$ using
	 equation \eqref{eq:sub1} (note that $r/R<1$), we have
	 $$	h_{\Omega}(z)=c^*_{\Omega}(0,z) \leq c^*_{B^n(0,\alpha)}(0,z)=\dfrac {1}{\alpha} |z|<\dfrac r R.$$
	\end{enumerate}
Thus we get $z\in \Omega\left(\dfrac {r}{R}\right)\subseteq g(D)$, which proves our claim and 
therefore $S_D(a)\geq \dfrac{\alpha r}{R}$.
This establishes (2).
	
\end{proof}

\begin{remark}
		The two inequalities in the above theorem can not be attained simultaneously unless $\Omega =
		\mathbb{B}^n$. To see this, note that the two inequalities are attained simultaneously only when $
		\alpha= R$.
\end{remark}
Note that previous theorem in particular implies that holomorphic homogeneous regular domains in 
the set up of $S_D$ and $S_D^{\Omega}$ are the same.

The interaction between generalized squeezing function for different model domains $\Omega$ is presented 
in the following easy 
corollary to the above proposition.
For relation between  $S_D^{\Omega}$ for different model domains, where $\Omega$ is {\emph homogeneous} as 
well, one can see \cite[~Theorem 4.4]{gen}. We will see analogue of this result for $d$-balanced squeezing function 
in Theorem \ref{thm:analogue}.

\begin{corollary}\label{thm:rel_gensq}
	Let $D\subseteq \mathbb{C}^n$ be a bounded domain and $\Omega_1, \Omega_2\subseteq \mathbb{C}^n $ be bounded, balanced, 
	covex domains, then the following holds:
	
	\begin{enumerate}
		\item $S_D^{\Omega_2}(a)\geq \dfrac{\alpha_1\alpha_2}{R_1R_2}S_D^{\Omega_1}(a),$
		\item $S_D^{\Omega_1}(a)\geq \dfrac{\alpha_1\alpha_2}{R_1R_2}S_D^{\Omega_2}(a),$	
	\end{enumerate}
where $\alpha_i= dist(0,\partial \Omega_i)$ and $R_i= \dfrac 1 2 diam(\Omega_i),\ i=1,2$.
	\begin{proof}
		To prove (1), use part (2) of Proposition \ref{prop:rel_sq_gensq} for $\Omega_2$ and then use part
		 (1) for $\Omega_1$. One can prove (2) in similar way.
	\end{proof}
	
\end{corollary}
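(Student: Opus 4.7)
The plan is to treat this corollary as an immediate consequence of Proposition~\ref{prop:rel_sq_gensq} by chaining the two inequalities through the intermediate quantity $S_D(a)$, i.e.\ the usual (ball-based) squeezing function. Since part (1) of the proposition bounds $S_D(a)$ from below in terms of $S_D^{\Omega}(a)$, and part (2) bounds $S_D^{\Omega}(a)$ from below in terms of $S_D(a)$, combining them with different choices of model domain is essentially automatic.

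For statement (1), I would first apply Proposition~\ref{prop:rel_sq_gensq}(2) with $\Omega = \Omega_2$, which gives
\[
S_D^{\Omega_2}(a) \geq \frac{\alpha_2}{R_2}\, S_D(a).
\]
Then I would apply Proposition~\ref{prop:rel_sq_gensq}(1) with $\Omega = \Omega_1$, which yields
\[
S_D(a) \geq \frac{\alpha_1}{R_1}\, S_D^{\Omega_1}(a).
\]
Substituting the second inequality into the first gives exactly $S_D^{\Omega_2}(a) \geq \frac{\alpha_1\alpha_2}{R_1R_2}\, S_D^{\Omega_1}(a)$. Statement (2) follows by the same argument with the roles of $\Omega_1$ and $\Omega_2$ interchanged; the constant is symmetric in the indices $1$ and $2$, so the bound looks identical.

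There is no genuine obstacle here: the corollary is a purely formal consequence of Proposition~\ref{prop:rel_sq_gensq}, and no new geometric construction, Minkowski-function estimate, or Carathéodory comparison is needed beyond what was already established. The only minor point worth noting in the write-up is that the constants $\alpha_i, R_i$ depend only on the model domains $\Omega_i$ (not on $D$ or the point $a$), so the chained inequality holds pointwise on $D$ and hence gives the desired relation between the two generalized squeezing functions as functions on $D$. In particular, this immediately transfers the property of being holomorphic homogeneous regular from one model domain $\Omega_1$ to any other bounded, balanced, convex model domain $\Omega_2$, answering the question raised in the introduction.
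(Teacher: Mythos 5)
Your proposal is correct and is exactly the paper's argument: chain Proposition~\ref{prop:rel_sq_gensq}(2) applied to $\Omega_2$ with Proposition~\ref{prop:rel_sq_gensq}(1) applied to $\Omega_1$ through the intermediate quantity $S_D(a)$, and interchange the roles for statement (2). You have simply written out explicitly the inequalities that the paper's one-line proof invokes.
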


A simple consequence of the above stated corollary is:
 $S_D^{\Omega_1}$ has a positive lower bound if and only if  $S_D^{\Omega_2}$
has positive lower bound. We would like to mention here, this statement has been proved in \cite[~Theorem 3.9 ]{gen}
 separately, but the bounds obtained here are the simplest possible and are useful in its own right.

\section{$S_D$ and $S_{d,D}^{\Omega}$}

Let us recall few properties of d-Minkowski function and $d$-balanced domains.
\begin{result}\label{res:basicminko}
	For a $d$-balanced domain $\Omega\subseteq \mathbb{C}^n$, the following holds:
	\rm{( see\cite[~Remark 2.2.14]{pflug})}
	\begin{enumerate}
		\item $\Omega =\{z\in{\mathbb{C}^n}:h_{d,\Omega}(z)<1\}$.
		\item $h_{d,\Omega}\left(\lambda^{d_1}z_1,\lambda^{d_2}z_2,\ldots, \lambda^{d_n}
		z_n\right)=|\lambda |h_{d,\Omega}(z)$ for each $z=(z_1,z_2,\ldots, z_n)\in{\mathbb{C}^n}$
		and $\lambda\in{\mathbb{C}}.$
		\item $h_{d,\Omega}$ is upper semicontinuous.
	\end{enumerate}
\end{result}
For a bounded, $d$-balanced, convex domain $\Omega$, using \cite[~Theorem 1.6]{bharali}, we have 
$$\tanh^{-1}h_{d,\Omega}(z)^L\leq c_{\Omega}(0,z)=k_{\Omega}(0,z)\leq \tanh^{-1}h_{d,\Omega}(z),$$
where $L=\max_{1\leq i\leq n}d_i.$
Similar to the case of generalized squeezing function, the following theorem for 
$d$-balanced squeezing function holds.

\begin{proposition}\label{prop:sq_dbal}
		Let $D\subseteq \mathbb{C}^n$ be bounded and $\Omega\subseteq \mathbb{C}^n$ be bounded, 
		$d$-balanced, convex domain.
	Then for $a\in D$ the following holds:
	\begin{enumerate}
		\item $S_D(a)\geq \dfrac{\alpha}{P}S_{d,D}^{\Omega}(a)^L,$
		\item $S_{d,D}^{\Omega}(a)\geq \dfrac{\alpha}{R}S_D(a),$	
	\end{enumerate}
	where $\alpha= dist(0,\partial \Omega)$ and $R=\dfrac 1 2 diam(\Omega), P=R+1, L=\max_{1\leq i\leq n}d_i.$
\end{proposition}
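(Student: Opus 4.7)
I would follow the template of Proposition~\ref{prop:rel_sq_gensq}, with the Minkowski identity $h_\Omega(z)=c_\Omega^*(0,z)$ (valid for balanced domains) replaced by the two-sided sandwich
\begin{equation*}
h_{d,\Omega}(z)^{L}\;\leq\;c_\Omega^*(0,z)\;\leq\;h_{d,\Omega}(z)
\end{equation*}
recalled just above the statement. The exponent $L$ on the left is precisely what will surface as $S_{d,D}^\Omega(a)^L$ in part~(1). The auxiliary inputs are the containments $\mathbb{B}^n(0,\alpha)\subseteq\Omega$ and an enclosing Euclidean ball for $\Omega$ (the radius of which accounts for the constant $P$), together with monotonicity of the Carath\'eodory pseudodistance under inclusion and the explicit formula $c_{\mathbb{B}^n(0,s)}^*(0,z)=|z|/s$.

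For part~(2), I would take an injective holomorphic $f\colon D\to\mathbb{B}^n$ with $f(a)=0$ and $\mathbb{B}^n(0,r)\subseteq f(D)$, and set $g(z):=\alpha f(z)$. Since $\mathbb{B}^n(0,\alpha)\subseteq\Omega$, the map $g$ is an injective holomorphic map $D\to\Omega$ with $g(a)=0$ and $\mathbb{B}^n(0,\alpha r)\subseteq g(D)$. To show $\Omega^d(\alpha r/R)\subseteq g(D)$, take any $w$ with $h_{d,\Omega}(w)<\alpha r/R$; monotonicity applied to $\Omega\subseteq\mathbb{B}^n(0,R)$, followed by the right half of the sandwich, gives
\begin{equation*}
\frac{|w|}{R}\;=\;c_{\mathbb{B}^n(0,R)}^*(0,w)\;\leq\;c_\Omega^*(0,w)\;\leq\;h_{d,\Omega}(w)\;<\;\frac{\alpha r}{R},
\end{equation*}
so $|w|<\alpha r$ and $w\in g(D)$, yielding $S_{d,D}^\Omega(a)\geq\alpha r/R$.

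For part~(1), I would start from an injective holomorphic $f\colon D\to\Omega$ with $f(a)=0$ and $\Omega^d(r)\subseteq f(D)$, and set $g(z):=f(z)/P$, chosen so that $g$ maps $D$ into $\mathbb{B}^n$. To show $\mathbb{B}^n(0,\alpha r^L/P)\subseteq g(D)$, take $w$ in that ball; then $Pw\in\mathbb{B}^n(0,\alpha r^L)\subseteq\mathbb{B}^n(0,\alpha)\subseteq\Omega$, and the left half of the sandwich combined with monotonicity applied to $\mathbb{B}^n(0,\alpha)\subseteq\Omega$ yields
\begin{equation*}
h_{d,\Omega}(Pw)^{L}\;\leq\;c_\Omega^*(0,Pw)\;\leq\;c_{\mathbb{B}^n(0,\alpha)}^*(0,Pw)\;=\;\frac{P|w|}{\alpha}\;<\;r^L,
\end{equation*}
so $h_{d,\Omega}(Pw)<r$, i.e.\ $Pw\in\Omega^d(r)\subseteq f(D)$ and $w\in g(D)$. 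This gives $S_D(a)\geq\alpha r^L/P$.

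The main subtlety is the choice of the scaling constant $P$. For balanced convex $\Omega$ the involution $z\mapsto-z$ forces $\Omega\subseteq\mathbb{B}^n(0,R)$, and scaling by $R$ already lands in $\mathbb{B}^n$ automatically (this is how $1/R$ appeared in Proposition~\ref{prop:rel_sq_gensq}). For a $d$-balanced $\Omega$ this symmetry is not available in general, and one only has the weaker containment coming from $0\in\Omega$ and the diameter. Verifying that the specific padding $P=R+1$ suffices to guarantee that $g=f/P$ maps $D$ into $\mathbb{B}^n$ is the one non-routine bookkeeping step; all remaining manipulations are verbatim adaptations of those in Proposition~\ref{prop:rel_sq_gensq}.
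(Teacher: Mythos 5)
Your proposal follows the paper's template quite closely: part (2) is proved exactly "on the similar lines" that the paper leaves to the reader (rescale $f$ by $\alpha$ into $\mathbb{B}^n(0,\alpha)\subseteq\Omega$, then use monotonicity of $c^*$ together with the right half of the sandwich $h_{d,\Omega}^L\le c^*_\Omega\le h_{d,\Omega}$), and part (1) differs only in the choice of scaling: you take the isotropic dilation $g=f/P$, whereas the paper uses the $d$-anisotropic dilation $g=(f_1/P^{d_1},\dots,f_n/P^{d_n})$. This is a genuine, if small, improvement: your version produces the Euclidean ball $\mathbb{B}^n\bigl(0,\alpha r^L/P\bigr)\subseteq g(D)$ directly, which is what the definition of $S_D$ requires and which matches the stated constant $\alpha/P$; the paper's anisotropic route only establishes $\Omega^d(\alpha r^L/P)\subseteq g(D)$, and extracting a Euclidean ball from that set along the same estimates yields radius $\alpha(r/P)^L$, i.e.\ the weaker constant $\alpha/P^L$, unless one adds extra hypotheses (e.g.\ $\alpha\le 1$). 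So on this point your argument is cleaner than the paper's.

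The genuine gap is the pair of Euclidean containments on which everything rests. In part (2) you invoke $\Omega\subseteq\mathbb{B}^n(0,R)$ with $R=\frac12\mathrm{diam}(\Omega)$ to get $c^*_{\mathbb{B}^n(0,R)}\le c^*_\Omega$, and in part (1) you need $\Omega\subseteq\mathbb{B}^n(0,P)$ to know $g=f/P$ maps into $\mathbb{B}^n$; you flag the second as "non-routine bookkeeping" but never verify it, and you use the first silently even though your own closing paragraph correctly observes that for a merely $d$-balanced domain the $z\mapsto-z$ symmetry is unavailable and only the weaker bound $\sup_{z\in\Omega}|z|\le\mathrm{diam}(\Omega)$ comes for free from $0\in\Omega$. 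For a genuinely $d$-balanced (non-balanced) convex $\Omega$ the circumradius about $0$ can exceed $R$ (the circle orbit $\lambda\mapsto(\lambda^{d_1}z_1,\dots,\lambda^{d_n}z_n)$ only forces $\sup_\Omega|z|\le\sqrt2\,R$ in general), so neither containment is automatic, and the "$+1$" in $P=R+1$ is not scale-invariant, so it cannot be recovered just by rescaling. To be fair, the paper itself simply asserts "the fact that $\Omega\subseteq\mathbb{B}^n(0,P)$" and implicitly needs $\Omega\subseteq\mathbb{B}^n(0,R)$ for its constant in (2), so your write-up reproduces the paper's argument faithfully; but as a standalone proof, supplying a justification (or a corrected constant, e.g.\ replacing $R$ by $\sup_{z\in\Omega}|z|$) at these two steps is the missing piece.
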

\begin{proof}
Let $a\in D$ and $f:D\to \Omega$ be injective holomorphic map with $f(a)=0$. Let $r>0$ be such that 
$\Omega^d(r)\subseteq f(D)\subseteq \Omega=\Omega^d(1).$ Consider $g:D\to \mathbb{C}^n$ defined as 
$$g(z):=\left(\dfrac{f_1(z)}{P^{d_1}},\dfrac{f_2(z)}{P^{d_2}}, \ldots, 
\dfrac{f_n(z)}{P^{d_n}}\right).$$
It is obvious that $g$ is injective holomorphic with $g(a)=0$. 
Also using Result \ref{res:basicminko}(2) and the fact that $\Omega\subseteq \mathbb{B}^n(0,P)$
 it is easy to infer that 
$g(D)\subseteq \mathbb{B}^n $.  Therefore, $g:D\to \mathbb{B}^n$. Next we claim that 
$\Omega^d\left(\dfrac{\alpha r^L}{P}\right)\subseteq g(D)$. 
Let $z\in \Omega^d\left(\dfrac{ \alpha r^L}{P}\right) $, that is,
$h_{d,\Omega}(z)<\dfrac{ \alpha r^L}{P}<1$. Therefore, we get $z\in \Omega$. 
Also using the decreasing property of Caratheodory metric, 
we have $c^*_{\mathbb{B}^n(0,P)}(0,z)\leq c^*_{\Omega}(0,z)$ for $z\in {\Omega}$. 
Thus we have $\dfrac{1}{P}|z|\leq c^*_{\Omega}(0,z)\leq h_{d,\Omega}(z)<
\dfrac{ \alpha r^L}{P}$. Thus, $|z|<
\alpha r^L\leq \alpha$ and for $z\in{\mathbb{B}^n(0,\alpha)}$, using the decreasing property, 
we have $h_{d,\Omega}(z)^L\leq c^*_{\Omega}(0,z)\leq c^*_{\mathbb{B}^n(0,\alpha)}(0,z)=\dfrac{1}{\alpha}|z|.$
Therefore, we obtain $h_{d,\Omega}(z)<\dfrac{\alpha r}{P}$. Now using Result \ref{res:basicminko}(2), 
we get  $z\in g(D)$. 
Therefore, $\Omega^d\left(\dfrac{\alpha r^L}{P}\right)\subseteq g(D)$ and this finally yeilds (1). 
Proceding on the similar lines, one can prove (2). 	
\end{proof}

The following corollary ensures that the holomorohic homogeneous regular domains are same 
corresponding to different $d$-balanced domains.

\begin{corollary}\label{thm:dbal}
	Let $D\subseteq \mathbb{C}^n$ be bounded and $\Omega_1,\Omega_2\subseteq \mathbb{C}^n$ be
	 bounded, convex domains. Assume that $\Omega_1$ is $d$-balanced, and $\Omega_2$ is 
	 $d'$-balanced, then for $a\in D$, the following holds:
	\begin{enumerate}
		\item $S_{d',D}^{\Omega_2}(a)\geq \dfrac{\alpha_1\alpha_2S_{d,D}^{\Omega_1}(a)^L}{P_1R_2},$
		\item $S_{d,D}^{\Omega_1}(a)\geq \dfrac{\alpha_1\alpha_2S_{d',D}^{\Omega_2}(a)^{L'}}{R_1P_2},$
	\end{enumerate}
where $ P_i=R_i+1, \alpha_i= dist(0,\partial \Omega_i)$,
 $R_i=\dfrac 1 2 diam(\Omega_i),\ \ i=1,2$; $L=\max_{1\leq i\leq n}d_i$ and $L'=\max_{1\leq i\leq n}d_i'$.
\end{corollary}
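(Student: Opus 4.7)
The plan is to mirror the proof strategy used for Corollary \ref{thm:rel_gensq}: the statement relates two $d$-balanced squeezing functions, so I will chain the two parts of Proposition \ref{prop:sq_dbal} by passing through the ordinary squeezing function $S_D(a)$ as an intermediate quantity. The two parts of Proposition \ref{prop:sq_dbal} are asymmetric (one direction picks up the exponent $L$, the other does not), so the asymmetric exponents $L$ and $L'$ in the statement dictate which part should be applied to which $\Omega_i$.

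For part (1), I would first apply Proposition \ref{prop:sq_dbal}(1) to the domain $\Omega_1$ (with its parameters $\alpha_1, P_1$ and exponent $L$) to obtain
\[
S_D(a)\geq \dfrac{\alpha_1}{P_1}\, S_{d,D}^{\Omega_1}(a)^L.
\]
Then I would apply Proposition \ref{prop:sq_dbal}(2) to $\Omega_2$ (with parameters $\alpha_2, R_2$) to get
\[
S_{d',D}^{\Omega_2}(a)\geq \dfrac{\alpha_2}{R_2}\, S_D(a).
\]
Composing these two inequalities immediately yields the desired bound
\[
S_{d',D}^{\Omega_2}(a)\geq \dfrac{\alpha_1\alpha_2}{P_1 R_2}\, S_{d,D}^{\Omega_1}(a)^L.
\]

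For part (2), I would do the symmetric chaining with the roles of $\Omega_1$ and $\Omega_2$ swapped: apply Proposition \ref{prop:sq_dbal}(1) to $\Omega_2$ to bound $S_D(a)$ below by a multiple of $S_{d',D}^{\Omega_2}(a)^{L'}$, and then apply Proposition \ref{prop:sq_dbal}(2) to $\Omega_1$ to bound $S_{d,D}^{\Omega_1}(a)$ below by a multiple of $S_D(a)$. Composing gives the claimed inequality with constant $\alpha_1\alpha_2/(R_1 P_2)$ and exponent $L'$.

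There is no real obstacle here; the only thing to be careful about is the bookkeeping of which $\alpha_i, R_i, P_i, L$ appears where, and verifying that the hypotheses of Proposition \ref{prop:sq_dbal} are met for each $\Omega_i$ (each is bounded, convex, and respectively $d$- or $d'$-balanced, which is exactly what the corollary assumes). Thus the proof is a short consequence of Proposition \ref{prop:sq_dbal}, completely analogous to the derivation of Corollary \ref{thm:rel_gensq} from Proposition \ref{prop:rel_sq_gensq}.
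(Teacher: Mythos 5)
Your proposal is correct and is exactly the argument the paper intends: chaining the two parts of Proposition \ref{prop:sq_dbal} through $S_D(a)$, applying part (1) to the domain whose squeezing function appears with the exponent ($L$ or $L'$) and part (2) to the other, just as Corollary \ref{thm:rel_gensq} is deduced from Proposition \ref{prop:rel_sq_gensq}. The bookkeeping of $\alpha_i$, $R_i$, $P_i$, $L$, $L'$ in your chaining matches the stated constants.
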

\begin{proof}
	One can prove this by proceeding on the same lines as in the Corollary \ref{thm:rel_gensq}.
\end{proof}

Similarly to the Theorem \cite[~Theorem 4.4]{rong} the following theorem gives relation between 
$d$-balanced squeezing function for different model domains, 
when domains in consideration are bounded, $d$-balanced, convex and \emph{homogeneous}.

\begin{theorem}\label{thm:analogue}
let $\Omega_1, \Omega_2\subseteq \mathbb{C}^n$ be bounded, convex, homogeneous domains and $D\subseteq 
\mathbb{C}^n$ be a bounded domain. Assume that 
$\Omega_1$ is $d$-balanced and $\Omega_2$ is $d'$-balanced. Then 
\begin{enumerate}
	\item $S_{d',D}^{\Omega_2}(z)\geq S_{d,\Omega_1}^{\Omega_2}(0)S_{d,D}^{\Omega_1}(z)^L,$
	\item $S_{d,D}^{\Omega_1}(z)\geq S_{d',\Omega_2}^{\Omega_1}(0)S_{d',D}^{\Omega_2}(z)^{L'},$
\end{enumerate}	
where $L=\max_{1\leq i\leq n}d_i$ and $L'=\max_{1\leq i\leq n}d_i'$.
\end{theorem}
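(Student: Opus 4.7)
My plan is to construct an injective holomorphic competitor $F: D \to \Omega_2$ for the left-hand side by composing two near-extremal maps for the squeezing functions on the right. For each $\epsilon > 0$, I would pick an injective holomorphic $f: D \to \Omega_1$ with $f(z) = 0$ and $\Omega_1^d(r_1) \subseteq f(D)$ for $r_1 > S_{d,D}^{\Omega_1}(z) - \epsilon$; and, since $\Omega_2$ is $d'$-balanced, an injective holomorphic $g: \Omega_1 \to \Omega_2$ with $g(0) = 0$ and $\Omega_2^{d'}(r_2) \subseteq g(\Omega_1)$ for $r_2 > S_{d',\Omega_1}^{\Omega_2}(0) - \epsilon$ (reading the first subscript $d$ in the statement as $d'$, to conform to the definition of $S_{\cdot,\cdot}^{\cdot}$ whose first slot must record the balancedness of the model domain). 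Then $F = g \circ f$ is injective holomorphic with $F(z) = 0$, and everything reduces to showing
$$\Omega_2^{d'}\bigl(r_2 r_1^L\bigr) \subseteq F(D).$$

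The key tool is the scaling biholomorphism $\psi_{r_2}: \Omega_2 \to \Omega_2^{d'}(r_2)$ given by $(u_1,\ldots,u_n) \mapsto (r_2^{d'_1} u_1, \ldots, r_2^{d'_n} u_n)$, whose bijectivity between the indicated domains follows from Result \ref{res:basicminko}(2). Given $w \in \Omega_2^{d'}(r_2 r_1^L) \subseteq \Omega_2^{d'}(r_2) \subseteq g(\Omega_1)$, write $w = g(v)$ with $v \in \Omega_1$; then by biholomorphic invariance of $c$ under $g$, monotonicity of $c$ along $\Omega_2^{d'}(r_2) \subseteq g(\Omega_1)$, and biholomorphic invariance under $\psi_{r_2}$,
$$c_{\Omega_1}(0,v) = c_{g(\Omega_1)}(0,w) \leq c_{\Omega_2^{d'}(r_2)}(0,w) = c_{\Omega_2}\bigl(0,\psi_{r_2}^{-1}(w)\bigr).$$
The upper half of Bharali's inequality for $\Omega_2$ bounds the right-hand side by $\tanh^{-1} h_{d',\Omega_2}(\psi_{r_2}^{-1}(w)) = \tanh^{-1}\bigl(h_{d',\Omega_2}(w)/r_2\bigr) < \tanh^{-1} r_1^L$, and then the lower half of Bharali's inequality for $\Omega_1$ forces $h_{d,\Omega_1}(v)^L < r_1^L$. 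Hence $v \in \Omega_1^d(r_1) \subseteq f(D)$ and $w = g(v) \in F(D)$. Sending $\epsilon \to 0$ yields (1); part (2) follows by symmetrically interchanging the roles of $(d,\Omega_1)$ and $(d',\Omega_2)$.

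The delicate point is the bookkeeping around Bharali's two-sided inequality: its two halves are asymmetric, only the lower one carrying the exponent $L$. Applying the exponent-free upper half on $\Omega_2$ together with the exponent-carrying lower half on $\Omega_1$ is what produces the power $S_{d,D}^{\Omega_1}(z)^L$ on the right rather than a linear factor, and using the wrong half on either side would break the argument. Convexity of the model domains enters only via Bharali; homogeneity is not actually invoked by this argument and presumably appears in the hypothesis to parallel \cite[Theorem 4.4]{rong}.
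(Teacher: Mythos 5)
Your proof is correct, and it follows the paper's overall skeleton (compose a near-optimal $f:D\to\Omega_1$ with a map $g:\Omega_1\to\Omega_2$, then use Bharali's two-sided estimate with the exponent $L$ on the $\Omega_1$ side and the exponent-free bound on the $\Omega_2$ side), but it handles the key inclusion differently. The paper takes $g$ to be an \emph{extremal} map for $S_{d',\Omega_1}^{\Omega_2}(0)$, invoking \cite[Theorem 4.6]{d-balanced} --- this is exactly where the homogeneity hypothesis enters --- and then sandwiches $\Omega_2^{d'}(\alpha r^L)\subseteq B_{\Omega}^l(0,\beta)\subseteq g(\Omega_1^d(r))$ using the Lempert function of the scaled domain $\Omega=\Omega_2^{d'}(\alpha)$, which forces it to use Lemma \ref{lem:analogue} and the remark that the upper (Lempert) half of Bharali's estimate holds without pseudoconvexity, since $\Omega_2^{d'}(\alpha)$ need not be convex. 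You avoid both devices: by pulling $w$ back to $\Omega_2$ through the scaling biholomorphism $\psi_{r_2}$ and using invariance and monotonicity of the Carath\'eodory pseudodistance, you only ever apply Bharali on the original convex domains $\Omega_1,\Omega_2$, and your $\epsilon$-approximation of both suprema removes the need for extremal maps altogether. The trade-off: the paper's argument exhibits the constant $S_{d,\Omega_1}^{\Omega_2}(0)$ as realized by an actual extremal embedding, in the spirit of \cite[Theorem 4.4]{rong}, while your argument is more elementary (no Lempert function, no extremal-map existence theorem) and, as you observe, proves the inequality without the homogeneity assumption, which in the paper's proof is used only through the citation of \cite[Theorem 4.6]{d-balanced}. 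Your reading of the subscript in $S_{d,\Omega_1}^{\Omega_2}(0)$ as $d'$ is also the right one; the paper itself works with $\Omega_2^{d'}(\alpha)$, so the statement's subscript is a typo.
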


We will need the following lemma to prove this theorem. 

\begin{lemma}\label{lem:analogue}
	Let $\Omega\subseteq \mathbb{C}^n$ be a $d$-balanced domain and $\Omega_1=\Omega^d(r),$ where 
	$0<r\leq 1$. Then $$h_{d,\Omega_1}=\dfrac 1 rh_{d,\Omega}.$$ 
\end{lemma}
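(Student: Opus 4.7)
The plan is to unwind the definition of $h_{d,\Omega_1}$ and reduce the membership condition to one involving $h_{d,\Omega}$, after which the homogeneity property recorded in Result~\ref{res:basicminko}(2) does all the work. Explicitly, starting from
\[
h_{d,\Omega_1}(z)=\inf\Bigl\{t>0:\bigl(z_1/t^{d_1},\ldots,z_n/t^{d_n}\bigr)\in\Omega_1\Bigr\},
\]
I would replace the condition $(z_1/t^{d_1},\ldots,z_n/t^{d_n})\in\Omega_1$ by the equivalent statement $h_{d,\Omega}\!\bigl(z_1/t^{d_1},\ldots,z_n/t^{d_n}\bigr)<r$, which is just the definition of $\Omega^d(r)$.

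Next I would apply Result~\ref{res:basicminko}(2) with $\lambda=1/t$ (taking $t>0$ real, so $|\lambda|=1/t$) to obtain
\[
h_{d,\Omega}\!\bigl(z_1/t^{d_1},\ldots,z_n/t^{d_n}\bigr)=\tfrac{1}{t}\,h_{d,\Omega}(z).
\]
The condition therefore becomes $h_{d,\Omega}(z)/t<r$, equivalently $t>h_{d,\Omega}(z)/r$, and taking the infimum over such $t$ gives $h_{d,\Omega_1}(z)=h_{d,\Omega}(z)/r$.

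There is essentially no hard step; the only bookkeeping issue is making sure that the infimum is taken over a non-empty set and that the strict inequality in the definition of $\Omega^d(r)$ is handled correctly. If $h_{d,\Omega}(z)=0$ then both sides are zero (since $\lambda^{d_i}z_i\in\Omega$ for all small $\lambda$, whence also $\lambda^{d_i}z_i\in\Omega_1$), and if $h_{d,\Omega}(z)>0$ the infimum is attained as a limit of $t\downarrow h_{d,\Omega}(z)/r$, giving equality. The content of the lemma is really just the scaling relation for $d$-Minkowski functions together with the fact that sublevel sets of $h_{d,\Omega}$ are themselves $d$-balanced.
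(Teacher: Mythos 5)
Your argument is correct and is essentially the paper's own proof in slightly different packaging: the paper compares the two defining sets $A=\{t>0:(z_1/t^{d_1},\ldots,z_n/t^{d_n})\in\Omega\}$ and $A_1=\{t>0:(z_1/t^{d_1},\ldots,z_n/t^{d_n})\in\Omega_1\}$ and shows $A_1=\tfrac1r A$, while you rewrite the membership condition directly via the homogeneity relation of Result~\ref{res:basicminko}(2); both reduce to the same scaling computation. Your handling of the strict inequality and the $h_{d,\Omega}(z)=0$ case is fine, so nothing is missing.
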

\begin{proof} First of all, observe that $\Omega_1$ is bounded, $d$-balanced and $\Omega_1 \subseteq 
	\Omega '$.
Let $A=\{t>0:\left(\frac{z_1}{t^{d_1}},\frac{z_2}{t^{d_2}},\ldots, \frac{z_n}{t^{d_n}} \right)\in
{\Omega}\}$ and $A_1=\{t>0:\left(\frac{z_1}{t^{d_1}},\frac{z_2}{t^{d_2}},\ldots, \frac{z_n}{t^{d_n}} \right)\in
{\Omega '}\}$.	It is easy to see that $t\in A$ if and only if $\dfrac t r\in A_1$. Therefore we get 
$A_1=\dfrac 1 r A$ and hence the result follows.
\end{proof}

\begin{proof}[Proof of theorem \ref{thm:analogue}]
Let $z\in D$ and $f:D\to \Omega_1$ be injective holomorpphic with $f(z)=0$. Let $r>0$ be such that 
\begin{equation}\label{eqn:analouge1}
\Omega_1^d(r)\subseteq f(D).
\end{equation}	

Let $\alpha=S_{d,\Omega_1}^{\Omega_2}(0)$, then using Theorem \cite[~Theorem 4.6 ]{d-balanced}, 
there exists  an injective holomorphic map $g:\Omega_1 \to \Omega_2$  with $g(0)=0$ such that
\begin{equation}\label{eqn:analogue2}  
 \Omega_2^{d'}(\alpha)\subseteq g(\Omega_1).
 \end{equation}	
 Consider $F:D\to \Omega_2$ defined as $F=g\circ f$, then $F$ is injective holomorphic with $F(z)=0$.
 Let us fix $\Omega=\Omega_2^{d'}(\alpha)$.
 We will first prove that $B_{\Omega}^l\left(0,\beta\right)\subseteq g\left(\Omega_1^d(r)\right),$ 
 where $\beta=\tanh ^{-1}r^L$ and $l$ denotes Lempert's function.

 Let $z\in B_{\Omega}^l\left(0,\beta\right) $, that is, $l_{\Omega}(0,z)<\beta$. Since $\Omega
 \subseteq g(\Omega_1)$, therefore $z=g(w)$ for some $w\in \Omega$ and using decreasing property of 
 Lempert's function, we get  $l_{g(\Omega_1)}(0,g(w))<\beta$, thus we get $l_{\Omega_1}(0,w)<\beta$. Also 
 using \cite[~Theorem 1.6]{bharali}, we have $\tanh^{-1}h_{d,\Omega_1}(z)\leq l_{\Omega_1}(0,w)$. Thus we 
 obtain $h_{d,\Omega_1}(w)<r$ and therefore $z=g(w)\in {g\left(\Omega_1^d(r)\right)}.$
 
 So we have obtained $B_{\Omega}^l\left(0,\beta\right)\subseteq g\left(\Omega_1^d(r)\right)$ and therefore 
 using Equation \eqref{eqn:analouge1}, we have  
 \begin{equation}\label{eqn:analogue3}
 B_{\Omega}^l\left(0,\beta\right)\subseteq g(f(D))=F(D). 
 \end{equation}
 Next we claim to prove $\Omega_2^{d'}(\alpha r^{L})\subseteq 
 B_{\Omega}^l\left(0,\beta\right)$. For $z\in{\Omega_2^{d'}(\alpha r^{L})}$, $h_{d',\Omega_2}(z)<
 \alpha r^{L}$. Using  Lemma \ref{lem:analogue}, we get $h_{d',\Omega}(z)<r^{L}$. Now observing that 
 the right hand side of inequality in  \cite[~Theorem 1.6]{bharali} is true without pseudoconvexity assumption 
 of the domain, we get $\tanh l_{\Omega}(0,z)\leq h_{d',\Omega}(z)<r^{L}$. This implies $z
 \in{B_{\Omega}^l\left(0,\beta\right)}$. This proves our claim and thus using Equation 
 \eqref{eqn:analogue3}, we get 
 $$\Omega_2^{d'}(\alpha r^{L})\subseteq F(D).$$
 Hence we get $S_{d',D}^{\Omega_2}(z)\geq\alpha r^L$ and this completes the proof of (1). 
 Similarly one can prove (2).
\end{proof}

\section{generalized squeezing function for complex ellipsoid}
A \emph{complex ellipsoid} in $\mathbb{C}^n$ is a domain of the form 
$$E(m)=\{z\in \mathbb{C}^n:\sum_{j\in{I_1}}|z_j|^{2m_j}<1,\, |z_k|<1 \, \forall k\in{I_2}\},$$
where $1\leq m_i\leq \infty $, $I_1=\{1\leq j\leq n: m_j\neq \infty\}$ and $I_2=\{1,2,\ldots, n\}\setminus 
I_1$. $E(m)$ is convex if and only if $m_1,m_2,\ldots, m_n\geq \frac 1 2$. $E(m)=\mathbb{B}^n,$ when 
$m=(1,1, \ldots, 1)$ and $E(m)=\mathbb{D}^n$, when $m_j=\infty $ for all $1\leq j\leq n$.

A \emph{generalized complex ellipsoid} denoted by $E(p,m)$ with $p=(p_1,p_2,\ldots, p_k),\, 
m=(m_1,m_2,\ldots, m_k)$, $p_1+p_2+\ldots +p_k=n$ is given by 
$$E(p,m)=\{z\in \mathbb{C}^{p_1}\times \mathbb{C}^{p_2}\times\ldots \times  \mathbb{C}^{p_k} 
:\sum_{j=1}^k\|z_j\|^{2m_j}<1\}.$$
If any of $m_j=\infty$, then $E(p,m)$ is understood as in the case of $E(m).$ In particular, 
when $m_j=\infty $ for each $j=1,2,\ldots, k$, $$E(p,\infty)=\mathbb{B}^{p_1}\times
 \mathbb{B}^{p_2}\times \ldots \times \mathbb{B}^{p_k}.$$
Note that $E(p,\infty)$ is bounded, balanced and convex. Also, it is easy to see that $\Omega$ is 
homoegeneous. 

Let us denote by $\Omega_i=\mathbb{B}^{p_i}\, , i=1,2,\ldots, k$, and $D=\mathbb{B}^n,$ 
then $\Omega=E(p,\infty)=\Omega_1\times 
\Omega_2\times \ldots \times \Omega_k$. Therefore, using \cite[~Corollary 4.3 ]{rong}, we get that
$$S_D^{\Omega}(z)=S^D_{\Omega}(w),$$ for each $z\in D$ and $w\in \Omega$. It is readily seen using 
\cite[~Theorem 7.5]{2012} that  
$$S^D_{\Omega}(w)=\left(S_{\Omega_1}(w_1)^{-2}+S_{\Omega_2}(w_2)^{-2}+\ldots+ S_{\Omega_k}(w_k)^{-2}\right)^{-1/2}
=\dfrac{1}{\sqrt{k}}$$
 and therefore $S_D^{\Omega}(z)=\dfrac{1}{\sqrt{k}}$.
We will present here an alternate method to obtain this expression using the inequality obtained in 
Proposition \ref{prop:rel_sq_gensq}. We will require the following observation for this.

 It is easy to see that $\alpha=dist(0,\partial \Omega)=1$ and 
$R=\frac 1 2 diam (\Omega)=\sqrt{k}$. We also need the following lemma, whose proof is based on the 
technique used by H. Alexander in the proof of \cite[~Proposition 2]{alexander}.

\begin{lemma}\label{lem:E}
	Let $f:D\to \Omega$ be injective holomorphic with $f(0)=0$. Let $r>0$ be such that 
	$\Omega(r)\subseteq f(D)$, then $r\leq \dfrac{1}{\sqrt{k}}.$
\end{lemma}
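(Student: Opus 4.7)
The plan is to reduce the estimate on $r$ to a linear-algebraic statement at the origin, working with $A = Df(0)$ and $M = Dg(0) = A^{-1}$, where $g = f^{-1}|_{\Omega(r)} \colon \Omega(r) \to \mathbb{B}^n$. Writing $f = (f_1,\ldots,f_k)$ with $f_i \colon \mathbb{B}^n \to \mathbb{B}^{p_i}$ makes $A$ a stack of block rows $A_i = Df_i(0)$ of size $p_i \times n$, and makes $M$ a concatenation of block columns $M^{(i)}$ of size $n \times p_i$.

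First I would apply the classical Schwarz lemma for holomorphic maps between balls to each $f_i$. Since $f_i(0)=0$, this yields $\|A_i\|_{\mathrm{op}} \leq 1$, and, since $\operatorname{rank}(A_i) \leq p_i$, we get $\|A_i\|_F^2 \leq p_i$. Next I would combine the product formula for the infinitesimal Kobayashi metric of $\Omega(r)$ at the origin, namely $\kappa_{\Omega(r)}(0;u) = \max_i \|u_i\|/r$, with the distance-decreasing property of $g$. This yields $\|Mu\| \leq \max_i \|u_i\|/r$ for every $u \in \mathbb{C}^n$; in particular, whenever $\|u_i\| \leq 1$ for all $i$, we have $\|Mu\|^2 \leq 1/r^2$.

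The bridge between these two sets of estimates is the identity $AM = I_n$, which splits into block relations $A_iM^{(i)} = I_{p_i}$. Cauchy--Schwarz for the Hilbert--Schmidt inner product then yields
\[
p_i = \operatorname{tr}(A_iM^{(i)}) \leq \|A_i\|_F\,\|M^{(i)}\|_F \leq \sqrt{p_i}\,\|M^{(i)}\|_F,
\]
so $\|M^{(i)}\|_F^2 \geq p_i$ for each $i$. To conclude, I would choose $u_i$ independently and uniformly on the unit sphere of $\mathbb{C}^{p_i}$; since $\mathbb{E}[u_iu_i^*] = I_{p_i}/p_i$ and the off-diagonal blocks of $\mathbb{E}[uu^*]$ vanish by independence, a trace computation gives
\[
\mathbb{E}\|Mu\|^2 = \sum_{i=1}^k \|M^{(i)}\|_F^2/p_i \geq k.
\]
Hence some admissible $u$ (with $\|u_i\|=1$ for every $i$) satisfies $\|Mu\|^2 \geq k$; combined with the Kobayashi bound $\|Mu\|^2 \leq 1/r^2$, this forces $r \leq 1/\sqrt{k}$, as claimed.

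The hard part is precisely the middle step: the Schwarz estimate on $A$ and the Kobayashi estimate on $M$, applied separately, yield only the trivial bound $r \leq 1$. It is the block-wise trace identity $A_iM^{(i)} = I_{p_i}$, combined with the averaging (equivalently, a phase-averaging) over $u$, that extracts the non-trivial factor $\sqrt{k}$ reflecting the $k$-fold product structure of $\Omega$.
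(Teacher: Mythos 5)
Your proof is correct, and it is genuinely different from the one in the paper. The paper follows Alexander's global argument: it invokes the known value $S_{\Omega(r)}(0)=1/\sqrt{k}$ for the product of balls (Theorem 7.5 of Deng--Guan--Zhang) to produce a point $a\in\mathbb{B}^n\left(0,\frac{1+\epsilon}{\sqrt{k}}\right)$ with $f(a)\notin\Omega(r)$, and then applies the Schwarz lemma to a single block component $f_1$ to get $r\leq\|f_1(a)\|\leq\|a\|\leq\frac{1+\epsilon}{\sqrt{k}}$. You instead work entirely at the infinitesimal level at the origin: the block Schwarz bound $\|A_i\|_F^2\leq p_i$, the Kobayashi estimate $\|Mu\|\leq\max_i\|u_i\|/r$ (which, as you note, only needs the easy upper-bound direction of the product formula, via the disc $\lambda\mapsto\lambda u/c$), the trace Cauchy--Schwarz step $\|M^{(i)}\|_F^2\geq p_i$ from $A_iM^{(i)}=I_{p_i}$, and the averaging over products of unit spheres to extract a vector with $\|Mu\|^2\geq k$. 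Each route has its merits: the paper's proof is shorter but rests on the nontrivial cited product formula for squeezing functions, whereas yours is self-contained (it implicitly re-proves the relevant upper bound for this product domain), needs only the one-variable Schwarz lemma plus elementary linear algebra, and isolates a clean derivative-level inequality that explains where the factor $\sqrt{k}$ comes from. Two small points worth making explicit if you write it up: $Df(0)$ is invertible because an injective holomorphic map between equidimensional domains is a biholomorphism onto its image, and the bound $\|Mu\|\leq\max_i\|u_i\|/r$ extends to vectors with $\|u_i\|=1$ by homogeneity/continuity, so evaluating at a point of the product of spheres is legitimate.
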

\begin{proof}
Let us consider a function $g:\Omega(r)\to D$ defined as $$g(z):=f^{-1}(z).$$
For any $\epsilon>0$, consider $\mathbb{B}^n\left(0,\dfrac{1+\epsilon}{\sqrt{k}}
\right)$. Note that using \cite[~Theorem 7.5]{2012}, it follows that  
$S_{\Omega(r)}^D=\dfrac{1}{\sqrt{k}}$, therefore $g(\Omega(r))$ does not contain 
$\mathbb{B}^n\left(0,\dfrac{1+\epsilon}{\sqrt{k}}\right)$. Let $a\in{\mathbb{B}^n
	\left(0,\dfrac{1+\epsilon}{\sqrt{k}}\right)}$ be such that $b=f(a)\notin{\Omega(r)}=
\mathbb{B}^{p_1}(r)\times \mathbb{B}^{p_2}(r)\times \ldots \times 
\mathbb{B}^{p_k}(r)$. This yields one of $b_1,b_2,\ldots ,b_k$, say $b_1$, has modulus 
$\geq r$. Therefore,  using Schwarz's lemma, we get $$r\leq \|b_1\|=\|f_1(a)\|\leq \|a\|\leq 
\dfrac{1+\epsilon}{\sqrt{k}},$$ which completes the proof.
	
\end{proof}

\begin{example}
For $D=\mathbb{B}^n$, $\Omega=E(p,\infty)$, $$S_D^{\Omega}(z)=\frac{1}{\sqrt{k}},$$
for $z\in D.$ 
\end{example}
Before we prove this, observe that when $p_i=1$ for all $i=1,2,\ldots, k$, then $\Omega=\mathbb{D}^n$. For 
this case, we have $S_D^{\Omega}(z)=\dfrac{1}{\sqrt{n}},$ \cite[~Example 1]{npoly}.
\begin{proof}
Using Proposition \ref{prop:rel_sq_gensq}, we know that 
$$S_D^{\Omega}(a)\geq \dfrac{\alpha}{R}S_D(a)=\dfrac{\alpha}{R},$$
where $\alpha$ and $R$ as in the proposition.	Therefore, we get
 $$S_D^{\Omega}(z)\geq \dfrac{1}{\sqrt{k}}.$$
On the other hand, using Lemma \ref{lem:E}, we get 
$$S_D^{\Omega}(z)\geq \dfrac{1}{\sqrt{k}}.$$

\end{proof}

\section*{Acknowledgement}
The first author is thankful to Peter Pflug for fruitful discussions for some parts of this note.


\medskip

\end{document}